\newtheorem{openQuestion}{Open Question}[section]
\DeclarePairedDelimiter{\paren}{(}{)}
\DeclarePairedDelimiter{\bracket}{[}{]}
\DeclarePairedDelimiter{\braces}{\{}{\}}
\DeclarePairedDelimiter{\ceil}{\lceil}{\rceil}
\DeclarePairedDelimiter{\floor}{\lfloor}{\rfloor}
\DeclarePairedDelimiter{\abs}{\vert}{\vert}
\newcommand{\set}[1]{\braces*{#1}}
\DeclarePairedDelimiterX{\setbuild}[2]{\{}{\}}{{#1}\;\delimsize\vert\;{#2}}
\DeclarePairedDelimiterX{\setbuildc}[2]{\{}{\}}{{#1}\vcentcolon{#2}}
\newcommand{\eps}{\varepsilon}
\newcommand{\0}{\emptyset}
\newcommand{\defeq}{\coloneqq}
\newcommand{\what}[1]{\widehat{#1}}
\newcommand{\nfs}{\@ifnextchar.{}{.\@}}
\newcommand{\ie}{i.e\nfs}
\newcommand{\eg}{e.g\nfs}
\newcommand{\ce}{c.e\nfs}
\DeclareMathSymbol{\R}{\mathbin}{AMSb}{"52} % Real numbers
\newcommand{\upto}{\mathbin{\!\restriction\!}}
\newcommand{\fineq}{=^{*}}
\newcommand{\leT}{\le_{\rm T}}
\newcommand{\ud}{\overline{\rho}}
\newcommand{\ld}{\underline{\rho}}
\newcommand{\id}{\boldsymbol{\rho}}
\newcommand{\aud}{\overline{\id}}
\newcommand{\ald}{\underline{\id}}
\title{Asymptotic density, immunity, and randomness}
\author{Eric P. Astor\\
\affiliation{Dept\nfs{} of Mathematics, University of Chicago, 5734 S University Ave, Chicago, IL 60637, USA\\
{\tt\href{mailto:epastor@math.uchicago.edu}{epastor@math.uchicago.edu}}\\
{\tt\url{http://math.uchicago.edu/~epastor/}}}}
\date{\today}
\begin{document}
\maketitle

\begin{abstract}
In 2012, inspired by developments in group theory and complexity, Jockusch and Schupp introduced generic computability, capturing the idea that an algorithm might work correctly except for a vanishing fraction of cases. However, we observe that their definition of a negligible set is not computably invariant (and thus not well-defined on the 1-degrees), resulting in some failures of intuition and a break with standard expectations in computability theory.

To strengthen their approach, we introduce a new notion of intrinsic asymptotic density, with rich relations to both randomness and classical computability theory. We then apply these ideas to propose alternative foundations for further development in (intrinsic) generic computability.

Toward these goals, we classify intrinsic density 0 as a new immunity property, specifying its position in the standard hierarchy from immune to cohesive for both general and $\Delta^0_2$ sets, and identify intrinsic density $\frac{1}{2}$ as the stochasticity corresponding to permutation randomness. We also prove that Rice's Theorem extends to all intrinsic variations of generic computability, demonstrating in particular that no such notion considers $\0'$ to be ``computable''.
\end{abstract}

\section{Introduction}
For years, there has been strong interest in the distinction between the idealized world of computation and complexity and that of its real-world applications, particularly in problems or algorithms where we find a separation between the worst-case complexity (or, more broadly, difficulty) and the worst cases actually encountered in practice. The simplex algorithm for linear programming is the classic example; there is a family of examples on which the algorithm takes exponential time \cite{simplexExponential}, yet in practice, every problem actually encountered is solved within polynomial time bounds. Even more extreme examples are known, including several problems in group theory (including some variants of the word problem) that are non-computable in general, but for which a low-complexity algorithm solves all examples encountered in practice. \cite{genericComplexity} In complexity theory, current methods for exploring such structure include the average-case complexity introduced by Gurevich \cite{gurevichAverage} and Levin \cite{levinAverage}, though this is sensitive to one's choice of probability measure, as well as the smoothed analysis of Spielman and Tang \cite{smoothedAnalysis}; however, none of these methods have been adapted to computability theory, and it may well be that none are well-suited to such problems.

Taking a more direct approach, several researchers have begun work on the question of whether an algorithm's problematic behavior might be restricted to a negligible set. This is clearly related to the analysts' notion of ``almost everywhere'', whereby one works modulo sets of measure 0 so as to disregard problematic variations with no practical effect. In a sense, this study is motivated by envy of their methods --- in recent years, we have discovered problems that seem to be ``computable almost everywhere'', and are working to find the right definition for the phrase. In this paper, we take the direct approach, studying a new definition of negligibility as applied to the non-negative integers; we will spend most of our time fitting this idea into its proper computability-theoretic context, and then lay the foundations for further investigation into our motivating problem.

The essential difficulty in defining ``computable almost everywhere'' is that there is no uniform probability measure on the integers, and thus no natural notion of a null set. Instead, if we want a uniform measurement of the size of a subset of $\omega$, we are forced to abandon countable additivity and fall back to pseudo-measures. One of the most practical is asymptotic density.

\begin{defn}
Let $S\subseteq\omega$, where $\omega=\set{0,1,2,\ldots}$ is the set of natural numbers. For every $n\ge 0$, we denote $S\cap[0,n)$ by $S\upto n$.

We define the $n$-th \emph{partial density} of $S$ as
\begin{equation*}
\rho_n(S)\defeq\frac{\abs{S\upto n}}{n}.
\end{equation*}
The \emph{lower density} $\ld(S)$ of $S$ is
\begin{equation*}
\ld(S)\defeq\liminf_{n\to\infty}{\rho_n(S)}=\liminf_{n\to\infty}{\frac{\abs{S\upto n}}{n}},
\end{equation*}
and the \emph{upper density} $\ud(S)$ of $S$ is
\begin{equation*}
\ud(S)\defeq\limsup_{n\to\infty}{\rho_n(S)}=\limsup_{n\to\infty}{\frac{\abs{S\upto n}}{n}}.
\end{equation*}

If the limit of the partial densities exists (\ie{}, $\ld(S)=\ud(S)$), then we say that $S$ has \emph{(asymptotic) density}
\begin{equation*}
\rho(S)\defeq\lim_{n\to\infty}{\rho_n(S)}=\lim_{n\to\infty}{\frac{\abs{S\upto n}}{n}}.
\end{equation*}
\end{defn}

Of course, $0\le\ld(S)\le\ud(S)\le 1$ for all $S\subseteq\omega$. In an unfortunate collision of terms, at least for computability-theoretic work, a set is said to be \emph{generic} if it has density 1 (equivalently, $\ld(S)=1$). The name is motivated by the fact that given a generic set $S$, the probability that a random integer selected from $\left[0,n\right)$ will lie in $S$ approaches 1 as $n$ increases; thus, in some sense, such a set contains all generic integers. The complement of this notion is more useful for our purposes:

\begin{defn}
A set $S\subset\omega$ is said to be \emph{negligible} if it has density 0 (equivalently, if $\ud(S)=0$).
\end{defn}

In 2003, Kapovich, Myasnikov, Schupp, and Shpilrain introduced generic-case complexity \cite{genericComplexity}, considering problems modulo sets of density 0. They showed that this captured the phenomenon observed in several group-theoretic problems that are known to have non-computable instances while being simple to solve for every case encountered in practice; for instance, they demonstrated that for any $G$ in an extremely large class of groups, the word problem for $G$ has linear-time generic-case complexity. Myasnikov, in collaboration with Hamkins, went on to apply these ideas to Turing's halting problem \cite{genericHalting}, and proved that (for reasons having to do with the prevalence of trivially halting or trivially non-halting programs in many models of computation) the halting problem is ``generic-case decidable'' in said models. This was later refined by Rybalov \cite{stronglyGenericHalting}, who proved that the halting problem is \emph{not} ``strongly generic-case decidable'' (that is, decidable modulo sets with partial density converging to 0 exponentially fast); this proof, by contrast, is valid for all Turing-machine models of computation.

Jockusch and Schupp \cite{JSgc} have since defined and begun the study of the computability theory corresponding to generic-case complexity, and more generally the relations between asymptotic density and computability. Their work has been further developed in collaboration with Downey \cite{DJSdensity} and McNicholl \cite{ershovDensity}, and refined in specific cases by Igusa \cite{igusaNoMinimalPair} and Bienvenu, H\"olzl, and Day. \cite{absoluteUndecidables}

However, we return the focus to the notion of negligibility, since one would expect such a definition to have interesting ties to classical computability theory. For one, a negligible set might be said to be ``small'', ``sparse'', or even ``thin''. Such ``thinness'' properties have historically been of great interest in computability; they were the focus of Post's program \cite{post1944}, the first attempt to construct an incomplete \ce{} set, and have since proven to be of interest for unrelated reasons.

Negligibility (in the sense of asymptotic density) is closed downwards under the subset relation; any subset of a negligible set is itself negligible. It seems natural that it should be in the same family as the classical immunity properties, which provide the unifying computability-theoretic model for ``thinness''. However, negligibility does not lend itself to the same analysis that we apply to immunity. Choosing an alternate coding for the parameters of a membership problem is equivalent to applying a computable permutation to the underlying set, which can dramatically alter its asymptotic density. The most extreme example comes when we consider the class of infinite, co-infinite computable sets; the resulting consequences for \ce{} and co-\ce{} sets are essential to the remainder of this paper. We will need one standard definition of computability theory to incorporate a result of Downey, Jockusch, and Schupp: we say that a real $a$ is left-$\Sigma^0_2$ (left-$\Pi^0_2$) if its left cut is $\Sigma^0_2$ ($\Pi^0_2$).

\begin{prop}\label{prop:computableDensity}
Suppose $A$ is an infinite, co-infinite computable set. For any left-$\Sigma^0_2$ real $a$ and any left-$\Pi^0_2$ real $b$ with $0\le a\le b\le 1$, there is a computable permutation $\pi\!:\omega\to\omega$ such that $\pi(A)$ has lower density $a$ and upper density $b$.
\end{prop}
\begin{proof}
We note first that there is an infinite, co-infinite computable set $B$ with lower density $a$ and upper density $b$. In fact, this is nearly a theorem of Downey, Jockusch, and Schupp \cite{DJSdensity}, which states that for any $a$ and $b$ meeting our preconditions, there is a computable set $B$ with lower density $a$ and upper density $b$. Unless $a=b=0$ or $a=b=1$, this already ensures that $B$ is both infinite and co-infinite; if considering one of these cases, let $B$ be the set of perfect squares or its complement, respectively.

Since the infinite, co-infinite computable sets form an orbit under computable permutations, there is a computable permutation $\pi\!:\omega\to\omega$ such that $\pi(A)=B$; therefore, $\pi(A)$ has lower density $a$ and upper density $b$.
\end{proof}

\begin{cor}\label{cor:ceDensity}
If $A$ is infinite and \ce{}, there is a computable permutation \mbox{$\pi\!:\omega\to\omega$} such that $\pi(A)$ has density 1.
\end{cor}
\begin{proof}
Since $A$ is infinite and \ce{}, $A$ has an infinite (and co-infinite) computable subset $B$. By Proposition~\ref{prop:computableDensity}, there is a computable permutation $\pi\!:\omega\to\omega$ such that $\pi(B)$ has density 1. Since $B\subseteq A$, $\pi(B)\subseteq\pi(A)$, so $\pi(A)$ must also have density 1.
\end{proof}

\begin{cor}\label{cor:coceDensity}
If $A$ is co-infinite and co-\ce{}, there is a computable permutation $\pi\!:\omega\to\omega$ such that $\pi(A)$ has density 0.
\end{cor}

Since any infinite \ce{} set has density 1 under some computable permutation, any problem that is decidable on some infinite \ce{} subset of $\omega$ is in fact generic-case decidable if we choose the ``correct'' coding of the input. The corresponding coding is usually highly artificial, having little to do with the problem at hand.

In short, due to the sensitivity of asymptotic density to computable permutation, generic-case computability is sensitive to the coding we choose for a given problem. As some of the great strengths of Turing computability come from its invariance under choice of coding, we might hope to strengthen generic-case computability in such a way as to recover this invariance. To do so, we need to develop a stronger concept of negligibility, considering not only the upper and lower densities of a set, but those of all its images under computable permutations of $\omega$.

In Section~\ref{sec:intrinsicDensity}, we follow this approach and obtain a new pseudo-measure, \emph{intrinsic density}, which is invariant under computable permutations of $\omega$. We discuss various classes of sets that have intrinsic density, including the 1-random sets, which provide the foundation for our investigations in the rest of this paper.

For the remainder of the paper, we turn our focus to the new properties of intrinsic density. In Section~\ref{sec:idImmunity}, we begin with intrinsic density 0, the natural notion of being \emph{intrinsically negligible}, discussing it in the context of classical computability theory. In fact, intrinsic density 0 is an immunity property, fitting naturally into the hierarchy between immunity and cohesiveness, and we determine its place in the hierarchy for both unrestricted and $\Delta^0_2$ sets. In order to complete our description, we improve on a result of Downey, Jockusch, and Schupp \cite{DJSdensity}, constructing a strongly hyperhyperimmune set with upper density at least $1-\eps$ below $\0'$.

In Section~\ref{sec:idRandomness}, we reflect on the relation between intrinsic density and randomness, and the connection it provides between classical computability and randomness. In fact, intrinsic density provides a continuum from immunity to stochasticity, as any intrinsic density from the range $(0,1)$ is a version of stochasticity (modulo a fixed bias), while intrinsic density 0 is a form of immunity (as discussed in the previous Section). In fact, this correspondence can be reversed to extract various strengthenings of asymptotic density from the assorted notions of stochasticity --- some of which may prove fruitful topics of interest for future research.

Lastly, in Section~\ref{sec:intrinsicComputability}, we return to our motivating problem: the task of strengthening generic-case computability. After discussing some additional reasons for considering computably invariant notions of generic-case computability, we propose four such definitions, varying in degree of uniformity. All are strictly weaker than ordinary Turing computability, but even the weakest of our notions does not consider the halting problem (or, in fact, any nontrivial index set) to be computable.

In the remainder of the introduction, we collect notation and definitions that will be used for the rest of this paper. We will denote the $e$-th partial computable function by $\varphi_e$.

We will routinely identify a set $S\subseteq\omega$ with its characteristic function, $S(n)$, and also with the infinite binary sequence defining its characteristic function, $\set{S(n)}_{n\in\omega}$. By $S\upto n$, we mean either $S\cap\left[0,n\right)$ or the string consisting of the initial $n$ bits of the infinite sequence; which notation we are using at a given moment will be made clear by context. Two sets $S$ and $T$ have \emph{finite difference}, denoted $S\fineq T$, if $S(n)=T(n)$ for all sufficiently large $n$.

Given two finite binary strings $v$ and $w$, we say $v$ is a \emph{prefix} of $w$, denoted $v\preceq w$, if there is a binary string $x$ such that the concatenation of $v$ followed by $x$ is $w$ (\ie{}, $vx=w$); this definition extends to infinite binary sequences $w$ in the natural way.

The prefix-free Kolmogorov complexity of a binary string $w$ is denoted as $K(w)$; we refer to Downey and Hirschfeldt \cite{dhBook} or Nies \cite{niesCaR} for the details of its definition and properties, but note that it does relativize: we can consider the prefix-free Kolmogorov complexity of $w$ with respect to $A$, denoted $K^A(w)$. Both of these books also provide many equivalent characterizations of a 1-random set; for this paper, we will take the characterization in terms of the Kolmogorov complexity of initial segments as our definition. A set $S\subseteq\omega$ is \emph{1-random} if there is some constant $c$ such that $K(S\upto n)\ge n-c$ for all $n$. This definition inherits a natural relativization from prefix-free complexity: $S$ is 1-random relative to $A$ if there is some $c$ such that $K^A(S\upto n)\ge n-c$ for all $n$.

A set $S\subseteq\omega$ is \emph{1-generic} if, for every \ce{} set $X$ of finite binary strings, there is some initial segment $\sigma\prec S$ such that either $\sigma\in X$ or $\sigma\not\preceq\tau$ for every $\tau\in X$.

\section{Intrinsic density}
\label{sec:intrinsicDensity}

\begin{defn}
Let $S\subseteq\omega$. The \emph{absolute lower density} $\ald(S)$ of $S$ is
\begin{equation*}
\ald(S)\defeq\inf_{\pi}{\underline{\rho}(\pi(S))},
\end{equation*}
and the \emph{absolute upper density} $\aud(S)$ of $S$ is
\begin{equation*}
\aud(S)\defeq\sup_{\pi}{\overline{\rho}(\pi(S))},
\end{equation*}
where $\pi\!:\omega\to\omega$ is taken to vary over the set of computable permutations.

If the absolute upper and lower densities are equal, then we say that $S$ has \emph{intrinsic (asymptotic) density} $\id(S)$, where
\begin{equation*}
\id(S)\defeq\aud(S)=\ald(S).
\end{equation*}
In this case, not only does $S$ have a density, but its density is fixed under all computable permutations. We can develop analogous definitions for lower and upper densities; if $\ld(S)=\ld(\pi(S))$ for all computable permutations $\pi\!:\omega\to\omega$, we say that $S$ has \emph{intrinsic lower density} $\ld(S)$, and similarly for upper density.

If a set has intrinsic density 0, we say it is \emph{intrinsically negligible}.
\end{defn}

By Proposition~\ref{prop:computableDensity}, all infinite, co-infinite computable sets have absolute lower density 0 and absolute upper density 1. Thus, they are ``as far as possible'' from having an intrinsic density, at least in the sense that, under computable permutations, their densities range as widely as possible.

However, some might argue that 1-generic sets are further from having an intrinsic density than computable sets. It is simple to show that all 1-generic sets have lower density 0 and upper density 1. Since the class of 1-generic sets is closed under computable permutation, we can conclude that all 1-generic sets in fact have \emph{intrinsic} lower density 0 and \emph{intrinsic} upper density 1. This puts them ``as far as possible'' from having an intrinsic density, in the sense that no computable permutation can bring their upper and lower densities together.

For the rest of our work in this paper, we will focus primarily on sets that have an intrinsic density, rather than classes of sets that do not. With a few examples, we begin to establish the connections between intrinsic density and other computability-theoretic properties, and (particularly in discussing sets with intrinsic density strictly between 0 and 1) lay the groundwork for our later results.

We start with the r-cohesive and r-maximal sets. Recall that an infinite set $C$ is \emph{r-cohesive} if there is no computable set $R$ such that $R\cap C$ and $\overline{R}\cap C$ are both infinite, while a \ce{} set $C$ is \emph{r-maximal} if its complement is r-cohesive.

\begin{thm}[Jockusch, private correspondence]\label{thm:rCohesiveDensity}
Every r-cohesive set has intrinsic density 0.
\end{thm}
\begin{proof}
We note that if a set $C$ is r-cohesive, then its image under any computable permutation of $\omega$ is also r-cohesive; it thus suffices to prove that every r-cohesive set has density 0.

If we have a finite computable partition of $\omega$ (\ie{}, $\set{R_0,R_1,\ldots,R_{n-1}}$ computable and pairwise disjoint, with union $\omega$), $C$ must have finite intersection with all but one of these $R_i$, say $R_j$. By the finite subadditivity of upper density, $\ud(C)\le 0+\ud(C\cap R_j)\le\ud(R_j)$. If we take $R_i=\setbuild*{kn+i}{k\in\omega}$, we have that $\rho(R_i)=\frac{1}{n}$, so $\ud(C)\le\frac{1}{n}$. Since $n$ was an arbitrary natural number, the upper density of $C$ must be 0, so $\rho(C)=0$.
\end{proof}

\begin{cor}
Every r-maximal set has intrinsic density 1.
\end{cor}

However, sets of intermediate intrinsic density (strictly between 0 and 1) provide a more versatile basis for further investigation; as such, the 1-random sets will be essential to certain constructions later in this paper.

\begin{prop}\label{prop:randomDensity}
Every 1-random set has intrinsic density $\frac{1}{2}$.
\end{prop}
\begin{proof}
Any 1-random set obeys the Law of Large Numbers, in the sense that it has density $\frac{1}{2}$. \cite{niesCaR}{~(Prop.~3.2.13)} Since the class of 1-random sets is closed under computable permutations of $\omega$, every 1-random set has intrinsic density~$\frac{1}{2}$.
\end{proof}

We can use 1-randoms to construct sets of other intermediate intrinsic densities as well, by means of the following lemma and its corollaries.

\begin{lemma}\label{lem:randomThinning}
If $A$ has density $d$, and $B$ is 1-random relative to $A$, then $A\cap B$ has density $\frac{d}{2}$.
\end{lemma}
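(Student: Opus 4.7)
The plan is to reduce the claim to the relativized Law of Large Numbers along $A$-computable subsequences. The starting observation is that 1-randomness implies Church stochasticity, and this relativizes: if $B$ is 1-random relative to $A$, then for every $A$-computable injection $f\!:\omega\to\omega$, the selected subsequence $B(f(0)), B(f(1)), B(f(2)), \ldots$ has density $\tfrac{1}{2}$. This is standard (see Nies or Downey--Hirschfeldt), and is the one external ingredient I would invoke.

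First I would dispose of the degenerate case $d=0$: here $A\cap B\subseteq A$ forces $\ud(A\cap B)\le\ud(A)=0$, so $A\cap B$ has density $0=d/2$ for free. So assume $d>0$, which in particular makes $A$ infinite. Let $a_0<a_1<a_2<\cdots$ enumerate $A$ in increasing order. Since membership in $A$ is computable from $A$, the function $i\mapsto a_i$ is an $A$-computable injection.

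Applying the relativized Church stochasticity of $B$ with selection rule $i\mapsto a_i$, the set
\begin{equation*}
C\defeq\setbuild*{i\in\omega}{a_i\in B}
\end{equation*}
has density $\tfrac{1}{2}$. The final step is bookkeeping: set $k_n\defeq\abs{A\upto n}$. Then $A\cap B\upto n$ is precisely $\setbuild*{a_i}{i<k_n, \; a_i\in B}$, so $\abs{A\cap B\upto n}=\abs{C\upto k_n}$. Hence
\begin{equation*}
\rho_n(A\cap B)=\frac{\abs{C\upto k_n}}{k_n}\cdot\frac{k_n}{n}.
\end{equation*}
Since $d>0$ we have $k_n\to\infty$, so the first factor tends to $\rho(C)=\tfrac{1}{2}$ by the previous step, while the second factor tends to $\rho(A)=d$ by hypothesis. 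Multiplying the limits gives $\rho(A\cap B)=d/2$, as required.

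I do not expect a serious obstacle here, since each step is either immediate or appeals to a known relativization. The only place that requires a moment of care is justifying that the map $i\mapsto a_i$ qualifies as an $A$-computable selection for the purposes of relativized stochasticity; this just uses that the relativization of Church stochasticity is uniform in the oracle used to describe the selection rule, and $A$ suffices to compute the enumeration of $A$.
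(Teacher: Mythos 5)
Your proof is correct and follows essentially the same route as the paper's: both arguments pass $B$ through the $A$-computable monotone selection rule that reads $B$ along the increasing enumeration of $A$, invoke the relativized Law of Large Numbers (relativized stochasticity of 1-randoms) to get density $\tfrac{1}{2}$ for the selected subsequence, and then multiply by $\rho(A)=d$ via the identity $\rho_n(A\cap B)=\frac{\abs{C\upto k_n}}{k_n}\cdot\frac{k_n}{n}$. Your explicit treatment of the $d=0$ case is a small extra nicety the paper elides, but there is no substantive difference in the argument.
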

\begin{proof}
Interpreting $B$ as a binary sequence, consider the $A$-computable subsequence $\what{B}$ selected by the rule ``If $n\in A$, select $B(n)$.'' Since $B$ is 1-random relative to $A$, we see that $\what{B}$ must be an unbiased sequence; in other words, $\rho(\what{B})=\frac{1}{2}$.

However, by the definition of $\what{B}$ and asymptotic density,
\begin{equation*}
\rho(\what{B})=\lim_{n\to\infty}{\frac{\abs{\paren*{A\cap B}\upto n}}{\abs{A\upto n}}},
\end{equation*}
so
\begin{align*}
\rho(A)\rho(\what{B})
&=\paren*{\lim_{n\to\infty}{\frac{\abs{A\upto n}}{n}}}\paren*{\lim_{n\to\infty}{\frac{\abs{\paren*{A\cap B}\upto n}}{\abs{A\upto n}}}}\\
&=\lim_{n\to\infty}{\frac{\abs{\paren*{A\cap B}\upto n}}{n}}\\
&=\rho(A\cap B).
\end{align*}
Therefore, $A\cap B$ has density $\rho(A)\rho(\what{B})=\frac{d}{2}$.
\end{proof}

Since being 1-random is invariant under computable permutation, we obtain one more pair of corollaries:
\begin{cor}
If $A$ has intrinsic density $d$, and $B$ is 1-random relative to $A$, then $A\cap B$ has intrinsic density $\frac{d}{2}$.
\end{cor}

\begin{cor}\label{cor:randomIntersectionDensity}
If $\set{A_1,\ldots,A_n}$ are mutually relatively 1-random sets (\ie{}, each set is 1-random relative to the join of the others), then $\bigcap_{1\le i\le n}{A_i}$ has intrinsic density $2^{-n}$.
\end{cor}

%NOTE: I could include general means by which to ``average'' densities as well (\ie{}, using a 1-random to guide a non-computable join, or similar).

Having established a few tools to use in controlling the intrinsic density of sets (in this paper, largely useful for the construction of counterexamples), we can now proceed to consider intrinsic density in a broader context.

\section{Intrinsic density and immunity}
\label{sec:idImmunity}

As discussed in the Introduction, asymptotic density was defined as a substitute for a probability measure on a countable space. Its use in generic-case computability (and other topics) is in defining a density-0 set to be negligible, in the sense that its elements are eventually scarce. This provides one of the more practical notions of a ``small'' or ``thin'' subset of the integers, in some senses  more natural than asserting that a set has no infinite \ce{} subset (\ie{}, is \emph{immune}).

Unfortunately, having density 0 is not computably invariant. From the perspective of computability theory, set properties that vary under computable permutation have limited applications. By addressing this one issue, having intrinsic density 0 proves to be more powerful; for example, any infinite set having intrinsic density 0 (or, in fact, any intrinsic lower density other than 1) must be immune.

\begin{prop}
Any infinite non-immune set has density 1 under some computable permutation.
\end{prop}
\begin{proof}
This immediately follows from Corollary~\ref{cor:ceDensity}. If $S$ is infinite and not immune, it contains an infinite \ce{} subset $A$. By Corollary~\ref{cor:ceDensity}, $\pi(A)$ has density 1 for some computable permutation $\pi$. Since $S\supseteq A$, $\pi(S)\supseteq\pi(A)$, so $\pi(S)$ must also have density 1.
\end{proof}

\begin{cor}\label{cor:id0Immune}
Any infinite set with intrinsic lower density 0, and hence, any infinite set with intrinsic density 0, is immune.
\end{cor}

It is clear that the upper density of a set bounds the upper density of any of its subsets, so intrinsic density 0 is closed downwards under the subset relation. Since having intrinsic density 0 is a computably invariant property, closed under subsets, and implies immunity, intrinsic density 0 (here abbreviated id0) is a natural new immunity property, describing a strong notion of ``thinness''.

We therefore seek to determine its relation to the classical immunity properties:

\begin{defn}
A \ce{} list of pairwise disjoint finite sets $\set{D_i}$ (indexed as finite sets, so that the sets $D_i$ and the function $i\mapsto\max{D_i}$ are computable) is said to be a \emph{strong array}. Similarly, a uniformly \ce{} list of pairwise disjoint \ce{} sets $\set{U_i}$ is a \emph{weak array}. There appears to be no standard name for the intermediate concept, which we here term a \emph{computable array}: a \ce{} list of pairwise disjoint computable sets $\set{C_i}$ (indexed appropriately), with union $\bigcup{C_i}$ also computable.

An infinite set $A$ is \emph{hyperimmune} (sometimes abbreviated h-immune) if for every strong array $\set{D_i}$, there is some $j$ such that $A\cap D_j=\emptyset$; in this case, we say that $\set{D_i}$ fails to meet $A$. Similarly, $A$ is said to be \emph{strongly hyperimmune} (sh-immune) if no computable array meets $A$, and \emph{strongly hyperhyperimmune} (shh-immune) if no weak array meets $A$. In a slight generalization, we say $A$ is \emph{finitely strong hyperimmune} (fsh-immune) if no computable array $\set{C_i}$ with all $C_i$ finite meets $A$, and \emph{hyperhyperimmune} if no weak array $\set{U_i}$ with all $U_i$ finite meets $A$.

It was quickly noted that a set $A$ is hyperimmune if{}f no computable function dominates its principal function $p_A(n)\defeq\paren*{\mu x}\bracket*{\abs{S\upto x}\ge n}$; that is, for all computable functions $f$, we have $p_A(n)\ge f(n)$ infinitely often. Strengthening this, we say that an infinite set $A$ is \emph{dense immune} if its principal function dominates all computable functions: for all computable functions $f$ and all sufficiently large $n$, we have $p_A(n)\ge f(n)$.

An infinite set $A$ is \emph{cohesive} if, for all \ce{} sets $U_i$, either $A\cap U_i$ or $A\cap\overline{U_i}$ is finite. We can weaken this in a few standard ways: $A$ is \emph{r-cohesive} if the same property holds for computable sets $C_i$, or \emph{quasicohesive} (q-cohesive) if $A$ is a finite union of cohesive sets.
\end{defn}

These properties are organized in a natural hierarchy of implication, shown as Figure~\ref{fig:immunityImplications}. Chapter XI.1 of Soare \cite{soareREsets} provides an excellent reference for this hierarchy (though focused on co-\ce{} sets). We note that in the general case, the lack of implication between cohesiveness and dense immunity is witnessed by the existence of a non-high cohesive set, as constructed by Jockusch and Stephan \cite{nonhighCohesive}. Also, shh-immunity and hh-immunity are distinguishable in the general case, but by a remark of Cooper \cite{cooperHHimmune}, are equivalent for $\Delta^0_2$ sets (and thus for co-\ce{} sets).

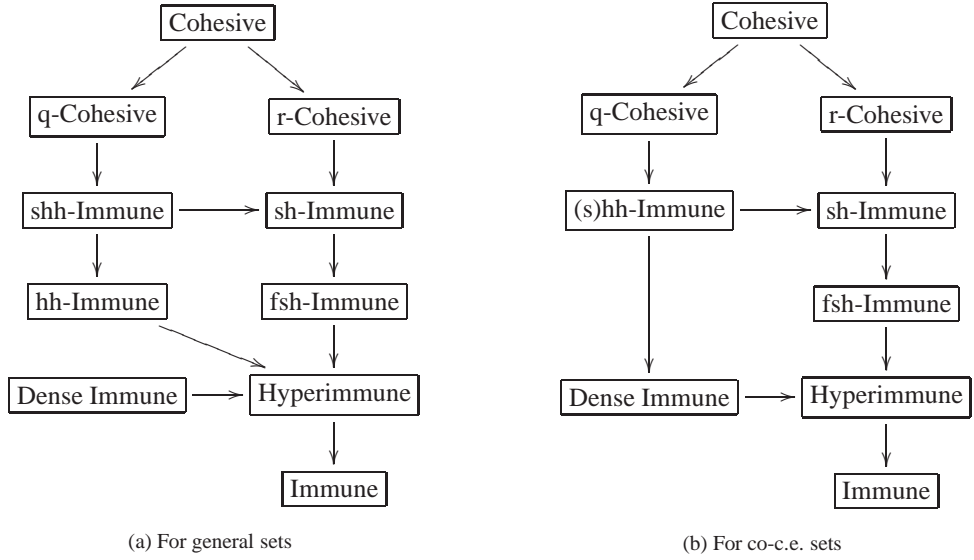
\begin{figure}
\centering
\hfill%
\begin{subfigure}{.35\linewidth}
\centering
\resizebox{\linewidth}{!}{
\xymatrix@R-8pt@C=-15pt{
	& & *+[F]+\hbox{Cohesive} \ar[dl] \ar[dr] &\\
	& *+[F]+\hbox{q-Cohesive} \ar[d] & & *+[F]+\hbox{r-Cohesive} \ar[d]\\
	& *+[F]+\hbox{shh-Immune} \ar[d] \ar[rr] & & *+[F]+\hbox{sh-Immune} \ar[d]\\
	& *+[F]+\hbox{hh-Immune} \ar[drr] & & *+[F]+\hbox{fsh-Immune} \ar[d]\\
	& *+[F]+\hbox{Dense Immune} \ar[rr] & & *+[F]+\hbox{Hyperimmune} \ar[d]\\
	& & & *+[F]+\hbox{Immune}
	}
}
\subcaption{For general sets}
\end{subfigure}%
\hfill%
\begin{subfigure}{.35\linewidth}
\centering
\resizebox{\linewidth}{!}{
\xymatrix@R-8pt@C=-15pt{
	& *+[F]+\hbox{Cohesive} \ar[dl] \ar[dr] & \\
	*+[F]+\hbox{q-Cohesive} \ar[d] & & *+[F]+\hbox{r-Cohesive} \ar[d]\\
	*+[F]+\hbox{(s)hh-Immune} \ar[dd] \ar[rr] & & *+[F]+\hbox{sh-Immune} \ar[d]\\
	& & *+[F]+\hbox{fsh-Immune} \ar[d]\\
	*+[F]+\hbox{Dense Immune} \ar[rr] & & *+[F]+\hbox{Hyperimmune} \ar[d]\\
	& & *+[F]+\hbox{Immune}
	}
}
\subcaption{For co-\ce{} sets}
\end{subfigure}
\hspace*{\fill}%
\caption{The graphs of the implications between the classical immunity properties; for $\Delta^0_2$ sets, the implications are the same as in the general case, except that shh-immunity and hh-immunity become equivalent. All implications are strict, and any not shown (excepting those implied by transitivity) are false.}
\label{fig:immunityImplications}
\end{figure}

By Theorem~\ref{thm:rCohesiveDensity}, r-cohesiveness implies intrinsic density 0. This has a simple corollary, included here for completeness:

\begin{cor}
Every quasi-cohesive set has intrinsic density 0.
\end{cor}
\begin{proof}
As a finite union of cohesive (and thus r-cohesive) sets, any quasi-cohesive set $Q$ is a finite union of sets of intrinsic density 0. Since density is finitely subadditive, $Q$ must also have intrinsic density 0.
\end{proof}

It is slightly more interesting to note that dense immunity also implies intrinsic density 0. To show this, we note that dense immunity is computably invariant, and that a certain domination property is equivalent to having density 0.

\begin{lemma}
For any infinite set $S\subseteq\omega$,
\begin{equation*}
\ud(S)\defeq\limsup_{n\to\infty}{\frac{\abs{S\upto n}}{n}}=\limsup_{n\to\infty}{\frac{n}{p_S(n)}},
\end{equation*}
where $p_S\defeq\paren{\mu x}\bracket*{\abs{S\upto x}\ge n}$ is the principal function of $S$.
\end{lemma}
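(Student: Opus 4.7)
The plan is to show the equality by comparing the two quantities via the identity $|S \upto p_S(m)| = m$ (for $m \ge 1$) and the inequality $p_S(|S\upto n|) \le n$, both of which are immediate from the definition of the principal function (indeed, $p_S(m) \le n$ iff $|S\upto n| \ge m$). Since $S$ is infinite, both $|S\upto n| \to \infty$ as $n \to \infty$ and $p_S(m) \to \infty$ as $m \to \infty$, so neither of the limsups in question is taken over a bounded sequence.

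For the direction $\ud(S) \le \limsup_m n/p_S(n)$ (with $n$ renamed), fix a subsequence $n_k$ along which $|S\upto n_k|/n_k$ converges to $\ud(S)$, and set $m_k = |S\upto n_k|$. Then $p_S(m_k) \le n_k$, so
\begin{equation*}
\frac{|S\upto n_k|}{n_k} = \frac{m_k}{n_k} \le \frac{m_k}{p_S(m_k)}.
\end{equation*}
Taking $k\to\infty$ on the left and passing to $\limsup_k$ on the right yields $\ud(S) \le \limsup_k m_k/p_S(m_k) \le \limsup_m m/p_S(m)$, using that $m_k \to \infty$.

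For the reverse direction, observe that for every $m \ge 1$ we have $|S\upto p_S(m)| = m$ exactly (since $p_S(m)-1 \in S$ whenever $p_S(m) \ge 1$), so
\begin{equation*}
\frac{m}{p_S(m)} = \frac{|S\upto p_S(m)|}{p_S(m)}.
\end{equation*}
As $p_S(m)$ ranges over an infinite subset of $\omega$ tending to infinity, its limsup as $m\to\infty$ is bounded above by $\limsup_{n\to\infty} |S\upto n|/n = \ud(S)$. Combining both inequalities gives the desired equality.

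The argument is essentially an exercise in bookkeeping, so there is no real obstacle; the only subtle point is ensuring that limsup statements over the subsequence $\{p_S(m)\}_m$ or over $\{|S\upto n|\}_n$ interact correctly with the limsup over all $n$, which is handled by the standard fact that limsup over a subsequence whose indices tend to infinity is bounded above by the full limsup.
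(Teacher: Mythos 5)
Your proof is correct and uses essentially the same approach as the paper: both rely on the identity $\frac{m}{p_S(m)} = \frac{|S\upto p_S(m)|}{p_S(m)}$ to view $\{n/p_S(n)\}$ as a subsequence of $\{|S\upto n|/n\}$ (giving one inequality), and both observe that the partial density can only increase at points in the range of $p_S$ (giving the other). Your version spells out the second observation more explicitly via the extracted subsequence $n_k$ and the bound $p_S(|S\upto n_k|) \le n_k$, but the underlying argument is the same.
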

\begin{proof}
Consider the sequence $\set{a_n}=\set{\frac{\abs{S\,\upto\,n}}{n}}_{n\in\omega}$. We note that $\set{b_n}=\set{\frac{n}{p_S(n)}}_{n\in\omega}$ is an infinite subsequence --- in fact, $b_n=a_{p_S(n)}$ for all $n\in\omega$ --- so
\begin{equation*}
\limsup_{n\to\infty}{b_n}\le\limsup_{n\to\infty}{a_n}=\ud(S).
\end{equation*}
The sequence $a_n$ may increase only at $n$ in the image of $p_S$ (and thus at points also in the subsequence $b_n$), so these limits must be equal.
\end{proof}

\begin{prop}\label{prop:density0Domination}
An infinite set $S\subseteq\omega$ has density 0 if{}f its principal function dominates all linear functions. (In standard asymptotic [``Big-O''] notation, $S$ has density 0 if{}f $p_S(n)\in\omega(n)$.)
\end{prop}
\begin{proof}
$S$ has density 0 if{}f $\ud(S)=0$, and by the preceding lemma,
\begin{equation*}
\ud(S)=\limsup_{n\to\infty}{\frac{n}{p_S(n)}}.
\end{equation*}
However, for all $k>0$, $\limsup_{n\to\infty}{\frac{n}{p_S(n)}}<\frac{1}{k}$ if{}f for all $c\in\R$, $p_S(n)$ dominates $kn+c$; therefore, $\ud(S)=0$ if{}f $p_S(n)$ dominates $kn+c$ for all $k>0$ and $c\in\R$.
\end{proof}

\begin{prop}\label{prop:denseImmuneInvariant}
If the set $A$ is dense immune, $\pi(A)$ is also dense immune for any computable permutation $\pi$.
\end{prop}
\begin{proof}
Let $\pi$ be a computable permutation, and consider a computable function $f$. We define
\begin{equation*}
\what{f}(n)=1+\max_{x\in\left[0,f(n)\right)}{\pi^{-1}(x)}.
\end{equation*}
Since $\what{f}$ is a computable function, the principal function of $A$ must dominate $\what{f}$; that is, $p_A(n)>\what{f}(n)$ for all but finitely many $n$. In other words, for all sufficiently large $n$, there are fewer than $n$ elements of $A$ less than $\what{f}(n)$.

However, by construction of $\what{f}$, every element of $\pi(A)$ less than $f(n)$ must come from an element of $A$ below $\what{f}(n)$. Since for almost all $n$, there are fewer than $n$ elements of $A$ below $\what{f}(n)$, we see that $p_{\pi(A)}$ dominates $f$. Since both $f$ and $\pi$ were arbitrary, every computable permutation of $A$ is dense immune.
\end{proof}

\begin{cor}
If $S\subseteq\omega$ is dense immune, $S$ has intrinsic density 0.
\end{cor}
\begin{proof}
By Proposition~\ref{prop:denseImmuneInvariant}, dense immunity is a computably invariant property. It therefore suffices to show that dense immunity implies density 0. However, this is an immediate consequence of Proposition~\ref{prop:density0Domination}, as all linear functions are bounded by computable functions, and so are dominated by the principal function of any dense immune set.
\end{proof}

None of the remaining standard immunity properties imply intrinsic density 0. In fact, as demonstrated by Downey, Jockusch, and Schupp \cite{DJSdensity}, for every $\eps>0$, there is a strongly hyperhyperimmune set with upper density at least $1-\eps$ (though none have upper density 1), constructed by Mathias forcing. We here extend their result, using a direct $\0'$-computable construction, to show that we may assume these sets to be $\Delta^0_2$. Of course, per the aforementioned remark of Cooper \cite{cooperHHimmune}, any $\Delta^0_2$ hh-immune set is in fact shh-immune; this apparently does not simplify the argument required, so we will make no use of this fact.

\begin{thm}\label{thm:hhImmune}
For all $\eps>0$, there is a $\Delta^0_2$ (s)hh-immune set $A$ with upper density at least $1-\eps$.
\end{thm}
\begin{proof}
We assume that $\eps$ is rational; if not, we can replace it by any rational less than $\eps$. We will construct $A$ as a $\0'$-computable set, consulting $\0'$ as an oracle during our otherwise-computable construction.

We work to satisfy the requirements:
\begin{align*}
P_e:\ &\paren*{\exists n>e}\bracket*{\rho_n(A)\ge 1-\eps},\\
N_e:\ &\paren*{\forall i,j}\bracket*{\paren*{i\ne j}\implies\paren*{U_{e,i}\cap U_{e,j}=\emptyset}}\implies\paren*{\exists k_e}\bracket*{A\cap U_{e,k_e}=\emptyset},
\end{align*}
where $\set{U_{e,i}}$ is a listing of the uniformly \ce{} sequences of sets. (In other words, every uniformly \ce{} sequence of sets is of the form $\set{U_{e,i}}_{i\in\omega}$ for some $e$.)

The negative requirements $N_e$ together assert that every weak array fails to meet $A$; this is the definition of shh-immunity.

Our negative requirements should, in principle, be simple to satisfy. We simply omit a set from each weak array of small lower density, thus leaving us with a set $A$ with high upper density. There are slight complications in ensuring that taking all of these requirements still cannot force $A$'s upper density to fall below $1-\eps$, but these are easily addressed. After all, at any given point $n$, only $q$ disjoint sets can have partial density exceeding $\frac{1}{q}$; therefore, for any weak array $\set{U_{e,i}}_{i=0}^{\infty}$, there must be some $U_{e,k}$ with lower density less than $2^{-e}\eps$.

The trouble comes in attempting to identify the $U_{e,k}$ in question. $\0'$ is incapable of resolving whether a \ce{} set has lower density below some bound; in fact, this problem is $\Sigma^0_2$-hard, as it would allow us to distinguish finite \ce{} sets from total \ce{} sets. Therefore, we cannot search directly for such a $U_k$ in our weak array, and must use more indirect methods.

Towards this end, we will make heavy use of the following sentence, for varying values of $e$, $n$, and $r$:
\begin{equation}\label{eqn:maxTuple}
\begin{split}
(\exists x_n>x_{n-1}&>\cdots>x_1\ge r)\paren*{\exists s}\paren*{\exists t>s}\\
&\bracket*{t\in C_{e-1,s}\land\paren*{\forall i\le n}\bracket*{\rho_t(U_{e,x_i})\ge d_e}}.
\end{split}\tag{*}
\end{equation}
$C_{e-1,s}$ will be defined in the course of our construction, but is \ce{}. The $U_{e,i}$'s are taken from our listing of uniformly \ce{} sequences of sets. As all sets involved are \ce{}, and we only ask whether a \ce{} set has partial density exceeding some lower bound, \eqref{eqn:maxTuple} is a $\Sigma^0_1$ sentence, and thus decidable by $\0'$.

Putting \eqref{eqn:maxTuple} in context, we understand it to say that there are $n$ elements of our weak array $\set{U_{e,i}}$, not including any with index less than $r$, which all have high partial density (exceeding $d_e$) at a single point $t>s$, where $t$ is chosen from some \ce{} set $C_{e-1}$ of possibilities. Clearly, this sentence is vacuously true for $n=0$, and (presuming our $U_{e,i}$'s to be disjoint) necessarily false for $n>\frac{1}{d_e}$. Therefore, for any fixed $r$ and $e$, the maximum $n$ for which this sentence holds is computable in $\0'$ by a simple bounded search; let us refer to it as $N$. If we have $N$ such elements of a weak array, we refer to them together as a maximal tuple for that array under the conditions $C_{e-1}$, $r$, and $e$.

If we have a maximal tuple for our weak array, and $U_{e,k}$ is not among its members, then we know that $\rho_t(U_{e,k})<d_e$ for some $t\in C_{e-1}$ with $t>s$. Otherwise, \eqref{eqn:maxTuple} would be satisfied with $n=N+1$. This will be our primary tool for controlling the lower density of the sets we omit as we build $A$ to avoid meeting the weak array $\set{U_{e,i}}_{i=0}^{\infty}$.

\textbf{\emph{Organization}}:

\nopagebreak

As we combine multiple negative requirements, we allow finite injury of each negative requirement by higher-priority requirements, though never revoking any previous decisions as to whether $s\in A$. We activate the requirements in order of decreasing priority, activating at most one at each stage. At stage $s$, each active requirement independently decides whether to allow $s$ into $A$; we put $s$ into $A$ if none of these requirements object.  For convenience, we denote $A\upto s$ by $A_s$.

Each requirement $N_e$ will maintain a \ce{} set $S_{e,s}$ of elements such that, if $A\cap S_{e,s}=\emptyset$, then $N_e$ will be satisfied. For internal reference, we will also keep track of $k_{e,s}$, which determines which element of the weak array $\set{U_{e,i}}$ we are actually restricting out of $A$. Lastly, we will maintain a \ce{} set $C_{e,s}$ of locations where the partial density of $U_{e,k_{e,s}}$ is known to be strictly less than $2^{-e-3}\eps$, while guaranteeing that $C_{e,s}\subseteq C_{e-1,s}$ at all stages $s$.

$N_e$'s basic goal is to prevent the weak array $\set{U_{e,i}}$ from meeting $A$, while ensuring that the lower density of its restricted set does not exceed $d_e=2^{-e-3}\eps$. To do so, $N_e$ will repeatedly consult $\0'$ regarding \eqref{eqn:maxTuple}. In context, we can now see that we choose $t\in C_{e-1,s}$ to ensure that the density of the set we omit for $N_e$ falls below $d_e$ at the same time as the densities of the previously-chosen sets fall below their critical values; this will make certain that the density of our set $A$ rises above its goal of $1-\eps$.

\textbf{\emph{Module for $N_e$}}:

\nopagebreak

\emph{On activation at stage $s$:} We first consult $\0'$, asking whether the sets $\set{U_{e,i}}$ are in fact pairwise disjoint (\ie{}, $\set{U_{e,i}}$ is a weak array). If not, then $N_e$ is trivially satisfied. In this case, $N_e$ will never restrict anything out of $A$; it simply maintains $S_{e,t}=\emptyset$ and $C_{e,t}=C_{e-1,t}$ at all stages $t\ge s$, while voting to allow all elements into $A$.

If $\set{U_{e,i}}$ is a weak array, we define $k_{e,s-1}$ to be the least $k$ such that $U_{e,k}\upto s=\emptyset$. We then set $S_{e,s-1}=U_{e,k_{e,s-1}}$, and let $C_{e,s-1}=\emptyset$, as we do not yet know of any locations where $\rho_t(S_{e,s})<d_e$.

\emph{At stage $s$:} We assume that $A_s=A\upto s$ has already been determined, and consider only whether to allow $s$ into $A$. Before making this decision, we must first determine whether we can still believe that we can restrict $S_{e,s-1}$ out of $A$ while keeping $\ud(A)$ close to 1. In fact, we want to verify that $S_{e,s-1}$ will again appear to have partial density less than $d_e$ at some point $t>s$ where the partial density of $S_{i,s}$ (for all $i<e$) is also small.

If $C_{e,s-1}\cap\paren*{s,\infty}\ne\emptyset$ (a $\0'$-computable question), we have already verified this at some previous stage. We simply define $S_{e,s}=S_{e,s-1}$, set $k_{e,s}=k_{e,s-1}$, and let $C_{e,s}=C_{e,s-1}$. We then allow $s$ into $A$ if{}f $s\not\in S_{e,s}$.

If $C_{e,s-1}\subseteq\bracket*{0,s}$, though, we must attempt to verify that the partial density of $S_{e,s-1}$ will fall below $d_e$ at some point in the future. We know that $\0'$ cannot answer this question directly, as it cannot determine whether a \ce{} set will ever have partial density less than some critical value. We instead use \eqref{eqn:maxTuple} to attack from a different angle. We will need to reference $k_{e,s-1}$ several times in the remainder of the procedure; for simplicity's sake, we will abbreviate it by $k=k_{e,s-1}$.

We first determine $n_{e,s}$, the greatest value of $n$ for which \eqref{eqn:maxTuple} holds with $r=k$. Since at most $\floor{1/d_e}$ disjoint sets can have partial density exceeding $d_e$ at the same location $t$, this is a bounded search on a parameter of a $\Sigma^0_1$ statement; thus, $\0'$ suffices to compute $n_{e,s}$.

We then ask $\0'$ whether \eqref{eqn:maxTuple} holds with $n=n_{e,s}$ and $r=k+1$. If so, then we have a maximal tuple (within the array $\set{U_{e,i}}$ for $i>k$) in which every set has high partial density at the same point $t>s$. Since we cannot add $U_{e,k}$ to this collection, we must have $\rho_t(U_{e,k})<d_e$. We define $C_{e,s}$ to be the set of all $t>s$ for which there is such a collection (along with all $t\le s$ for which $\rho_t(S_{e,s})<d_e$), set $k_{e,s}=k_{e,s-1}$ and $S_{e,s}=S_{e,s-1}$, and allow $s$ into $A$ if{}f $s\not\in S_{e,s}$. If this case occurs immediately following injury or initialization of $N_e$ at stage $s-1$, we say that $s$ was a ``recovery stage'' for $N_e$; otherwise, we deactivate all lower-priority requirements $N_i$ ($i>e$), as $C_{e,s}$ has changed.

Otherwise, the \eqref{eqn:maxTuple} does not hold with $n=n_{e,s}$ and $x_1$ strictly greater than $k$. In this case, we have no way to verify that the density of $U_{e,k}$ again drops below $d_e$, and so consider $N_e$ to be injured. We vote to allow $s$ into $A$, and deactivate all lower-priority requirements $N_i$ ($i>e$). We then effectively reset our procedure for $N_e$; we define $k_{e,s}$ to be the least $k>k_{e,s-1}$ such that $U_{e,k}\cap\bracket*{0,s}=\emptyset$, set
\begin{equation*}
S_{e,s}=\paren*{S_{e,s-1}\cap\left[0,s\right)}\cup U_{e,k},
\end{equation*}
and let $C_{e,s}=\emptyset$.

\textbf{\emph{Verification of the basic module:}}

\nopagebreak

Suppose that the module for $N_e$ is at some point activated and never again deactivated (\ie{}, $C_{e-1,s}$ does not change at any later stage $s$). We assume that $\set{U_{e,i}}$ is in fact a weak array; if it is not, $N_e$ is trivially satisfied, $S_{e,s}=\emptyset$ has partial density identically 0 for all $s$, and $C_{e,s}=C_{e-1,s}$ does not change at any later stage $s$.

By the construction of $S_{e,s}$, we know that $S_e=\lim_{s\to\infty}{S_{e,s}}$ exists, and consists of all elements restricted out of $A$ by $N_e$. We will show that there is some stage $s_0$ at which $C_{e,s_0}$ is infinite, thus ensuring that $C_{e,s}$ will not change at any later stage and  preventing future injury to $N_e$. This will also guarantee that $S_e=S_{e,s_0}$ and $k_e=\lim_{s\to\infty}{k_{e,s}}=k_{e,s_0}$.

Given such an $s_0$, since $A$ does not intersect $S_{e,s_0}\supseteq U_{e,k_e}$, we have satisfied $N_e$. Furthermore, the partial density of $S_e$ approaches that of $U_{e,k_e}$, as the sets agree on all $x\ge s_0$; therefore, if the partial density of $U_{e,k_e}$ drops below $d_e$ infinitely often, the partial density of $S_e=S_{e,s_0}$ must be less than $2d_e$ at all but finitely many of the same points.

We note that the sequence $\set{n_{e,s}}$ is nonincreasing, as we monotonically reduce the set of witnesses for \eqref{eqn:maxTuple} at successive stages $s$. In fact, the sequence must decrease each time $C_{e,s}$ changes (except at recovery stages); this can only happen when we have run out of witnessing collections of size $n_{e,s-1}$. As for recovery stages, they can only occur immediately after initialization of $N_e$, or immediately after an injury to $N_e$; since injuries cause $n_{e,s}$ to decrease, any recovery stage is still associated with a corresponding decrease in $n_{e,s}$. Since for $n=0$, \eqref{eqn:maxTuple} is vacuously true, $n_{e,s}$ is always a non-negative integer and so cannot decrease infinitely often. Therefore, there must be some stage $s_0$ such that $C_{e,t}=C_{e,s_0}\ne\emptyset$ for all $t>s_0$, which is only possible if $C_{e,s_0}$ is infinite.

Lastly, the module must force $A\cap S_e=\emptyset$. Whenever we choose a new $k_{e,s}$, we always choose a value $k$ such that $A_s\cap U_{e,k}=\emptyset$, and redefine $S_{e,s}$ accordingly to remain disjoint from $A_s$. When we keep the same $k$, we allow elements into $A$ if{}f they are not in $S_{e,s}$. Therefore, as long as $N_e$ is active, we are assured that $A_s\cap S_{e,s}=\emptyset$ for all $s$; since $S_e\upto s=S_{e,s}\upto s$, we will always have $A\cap S_e=\emptyset$.

\textbf{\emph{Construction of $A$:}}

\nopagebreak

At stage 0, begin by activating $N_0$.

At stage $s$, check whether $\rho_s(A)\ge 1-\eps$. If so, determine the highest-priority inactive requirement $N_e$. If $N_e$ was deactivated in stage $s-1$, do not activate any requirements; otherwise, activate $N_e$. (This delay in $N_e$'s reactivation ensures that $N_e$ is not activated during a recovery stage for $N_{e-1}$.)

Next, consult all active requirements in priority order. If any restrict $s$ out of $A$, we declare that $s\not\in A$; if all allow $s$ to enter $A$, we put $s$ into $A$.

\textbf{\emph{Verification:}}

\nopagebreak

Nothing can deactivate $N_0$, so $N_0$ is permanently activated. By the correctness of the basic module, if the module for $N_e$ is permanently activated, $N_e$ will be satisfied. Furthermore, there is some stage $s_0$ after which $C_{e,s}$ does not change, so that $N_{e+1}$ will never again be deactivated. Therefore, as long as there are infinitely many stages at which we activate some inactive requirement, every module will be permanently activated at some point, and thus every $N_e$ will be satisfied.

Suppose, towards a contradiction, that some requirement is never permanently activated. Let $N_e$ be the highest-priority such requirement, so that only modules $N_0$ through $N_{e-1}$ are permanently activated. We consider the construction at stage $s_0$, after the last such module has been permanently activated and $C_{e-1,s}$ has stopped changing (and is infinite).

At this stage, $N_e$ can never again be deactivated, so since $N_e$ is not permanently activated, $N_e$ must never again be activated. This can only be because the construction will never reach another stage where it activates the highest-priority inactive requirement; therefore, it must be that $A\cap\paren*{s,\infty}$ contains all elements except those in $S=\bigcup_{i<e}{S_i}$, and $\rho_t(A)<1-\eps$ for all $t>s$. For all sufficiently large $n$, we have that $\rho_n(A)<1-\eps$ implies $\rho_n(S)>\frac{1}{2}\eps$; thus, $\rho_n(S)>\frac{1}{2}\eps$ for all sufficiently large $n$.

However, $\rho_n(S)=\rho_n(\bigcup_{i<e}{S_i})\le\sum_{i<e}{\rho_n(S_i)}$. Recall that for all but finitely many $n\in C_{e-1}$, we have $\rho_n(S_i)<2d_e=2^{-n-2}\eps$, so
\begin{equation*}
\rho_n(S)\le\sum_{i<e}{2^{-i-2}\eps}<\frac{1}{2}\eps.
\end{equation*}
Since $C_{e-1}$ is infinite, this is a contradiction; therefore, all modules $N_e$ must be permanently activated eventually.

Finally, since every module is eventually activated, we must activate a new module infinitely often. This can only happen if $\rho_s(A)\ge 1-\eps$ infinitely often, so every requirement $P_e$ is also satisfied; $A$ must have upper density at least $1-\eps$.
\end{proof}

We have yet to consider implications in the other direction; what immunity properties are implied by intrinsic density 0? The first such result is simple; as established above in Corollary~\ref{cor:id0Immune}, intrinsic density 0 at least implies immunity for infinite sets.

On the other hand, we already know that hyperimmunity (even shh-immunity) does not imply intrinsic density 0. We can further prove that a set of intrinsic density 0 need not be hyperimmune; we can construct $\Delta^0_2$ counterexamples, and in fact will build a counterexample below every 1-random set.

\begin{thm}
For every 1-random set $R$, there is an infinite set $A\leT R$ with intrinsic density 0 that is not hyperimmune.
\end{thm}
\begin{proof}
Suppose that $K(R\upto n)\ge n-c$ for all $n$.

By van Lambalgen's Theorem \cite{vLaxioms}, given a 1-random set $R$, there exists a uniformly $R$-computable sequence of sets $\set{R_j}_{j\in\omega}$ that are mutually relatively 1-random. In fact, defining $\what{R}_j=\bigoplus_{i<j}{R_i}$, we have that
\begin{equation*}
K^{\what{R}_j}(R_j\upto\,n)\ge n-d_j
\end{equation*}
for all $n$, where $d_j$ is uniformly computable from $j$ and $c$; this can be shown by a simple inspection of a proof of van Lambalgen's Theorem.

Let $A_0=R_0$. Since $A_0$ is 1-random, it has intrinsic density $\frac{1}{2}$. Given $d_0$ and using the incompressibility of $R_0$, we can compute $k_0$ such that $\abs{R_0\upto\,k_0}\ge 1$, ensuring that $A_0\cap\left[0,k_0\right)\ne\emptyset$. Since $k_0$ is computable, we can use $\left[0,k_0\right)$ as the first partition in a weak array that will witness that the set we construct $A$ is not hyperimmune.

We then define
\begin{equation*}
A_1=A_0\cap\paren*{\left[0,k_0\right)\cup R_1}.  
\end{equation*}
Since $A_1\fineq R_0\cap R_1$, and $R_0$ and $R_1$ are mutually relatively 1-random, $A_1$ must have intrinsic density $\frac{1}{4}$ by Corollary~\ref{cor:randomIntersectionDensity}. Using $d_0$ and $d_1$ along with the incompressibility of $R_1$ (relative to $R_0$), we can compute $k_1$ such that $\abs{A_1\cap\left[k_0,k_1\right)}\ne\emptyset$.

Repeating this process, we see that $A=\bigcap_{j}{A_j}$ is computable in $R$, since $A\upto\,k_j=A_j\upto\,k_j$. For all $j$, we have that $A\cap\left[k_{j-1},k_{j}\right)\ne\emptyset$, so $A$ is infinite. Furthermore, since the $k_j$'s are uniformly computable from $c$, an integer, this partition of $\omega$ is in fact computable, demonstrating that $A$ is not hyperimmune.

Finally, $A\subseteq A_j$ for all $j$. Since $A_j\fineq\bigcap_{i\le j}{R_i}$, and the $R_i$'s are mutually relatively 1-random, $A_j$ has intrinsic density $2^{-j-1}$; therefore, $A$ must have intrinsic density 0.
\end{proof}

As a convenient side effect, this theorem immediately gives us some information on the Turing degrees of infinite sets with intrinsic density 0: such sets exist below every 1-random Turing degree, but cannot be computable. Among other things, this implies that there are infinite id0 sets in non-computable $\Delta^0_2$, low, and even hyperimmune-free degrees (where our construction of a non-hyperimmune $A$ becomes rather superfluous, though it at least ensures that $A$ is infinite).

It still eliminates any hopes we might have of further positive implications between the immunity properties and intrinsic density, as we will discuss in our summary below. By falling back to intrinsic lower density, we can recover one more positive implication, as shown by Jockusch in private correspondence.

\begin{thm}
Every hyperimmune set has intrinsic lower density 0.
\end{thm}
\begin{proof}
Since hyperimmunity is computably invariant, it suffices to show that every hyperimmune set has lower density 0.

Suppose $A$ is hyperimmune. Consider the strong array 
\begin{equation*}
D_n=\left[n!,(n+1)!\right).
\end{equation*}
Since $A$ is hyperimmune, $A\cap D_n=\emptyset$ for infinitely many $n$. For all such $n$, we have that $\abs{A\upto (n+1)!}\le n!$; therefore, $\rho_{(n+1)!}(A)\le\frac{1}{n}$. Since this occurs infinitely often, we conclude that $\underline{\rho}(A)=0$.
\end{proof}

The above results, along with earlier work \cite{JSgc} \cite{DJSdensity}, will suffice to disprove all other potential implications between intrinsic density 0, intrinsic lower density 0, and the standard immunity properties.

We first repeat, per Jockusch and Schupp \cite{JSgc}, that any 1-generic set has lower density 0 and upper density 1; since 1-genericity is computably invariant, 1-generics in fact have intrinsic lower density 0 and intrinsic upper density 1. Therefore, intrinsic lower density 0 does not imply intrinsic density 0, even for $\Delta^0_2$ sets.

In addition, all 1-random sets are immune; otherwise, there would be a 1-random $R$ with an infinite computable subset, which admits a trivial computable martingale that succeeds on $R$. Since 1-randoms have intrinsic density $\frac{1}{2}$, immunity does not imply intrinsic lower density 0, even for $\Delta^0_2$ sets.

Lastly, Theorem~\ref{thm:hhImmune} above demonstrates that for every $\eps>0$, there is a shh-immune set (in fact, a $\Delta^0_2$ hh-immune set) with upper density at least $1-\eps$. In particular, shh-immunity does not imply intrinsic density 0, even for $\Delta^0_2$ sets.
% Curved arrows:
% \ar@`{[0,1]-<0.5ex,0pt>,[1,1]-<0.5ex,0pt>}[4,1]!<-0.5ex,0pt>
% \ar@`{[0,-1]+<0.5ex,0pt>,[1,-1]+<0.5ex,0pt>}[4,-1]!<0.5ex,0pt>
% Straight arrows:
% \ar`r[4,1]!<-0.5ex,0pt>[4,1]!<-0.5ex,0pt>
% \ar`l[4,-1]!<0.5ex,0pt>[4,-1]!<0.5ex,0pt>
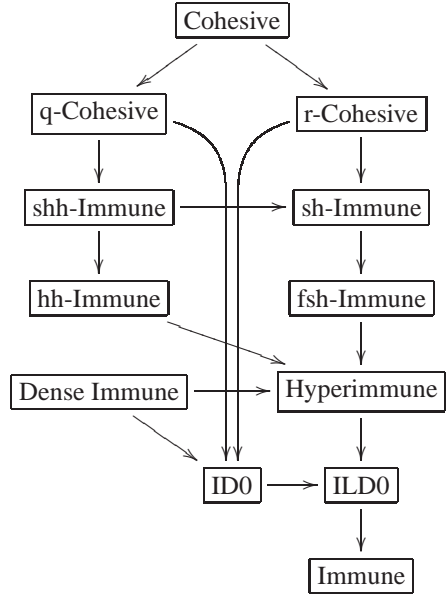
\begin{figure} 
\centering
\resizebox{0.37\linewidth}{!}{
\xymatrix@R-8pt@C=-10pt{
	& *+[F]+\hbox{Cohesive} \ar[dl] \ar[dr] & \\
	*+[F]+\hbox{q-Cohesive} \ar[d] \ar@`{[0,1]-<0.5ex,0pt>,[1,1]-<0.5ex,0pt>,[2,1]-<0.5ex,0pt>}[4,1]!<-0.5ex,0pt> & & *+[F]+\hbox{r-Cohesive} \ar[d] \ar@`{[0,-1]+<0.5ex,0pt>,[1,-1]+<0.5ex,0pt>,[2,-1]+<0.5ex,0pt>}[4,-1]!<0.5ex,0pt>\\
	*+[F]+\hbox{shh-Immune} \ar[d] \ar[rr] & & *+[F]+\hbox{sh-Immune} \ar[d]\\
	*+[F]+\hbox{hh-Immune} \ar[drr] & & *+[F]+\hbox{fsh-Immune} \ar[d]\\
	*+[F]+\hbox{Dense Immune} \ar[dr] \ar[rr] & & *+[F]+\hbox{Hyperimmune} \ar[d]\\
	& *+[F]+\hbox{ID0} \ar[r] & *+[F]+\hbox{ILD0} \ar[d]\\
	& & *+[F]+\hbox{Immune}
	}
}
\caption{The graph of implications between the classical immunity properties and intrinsic density 0. Again, for $\Delta^0_2$ sets, shh-immunity and hh-immunity become equivalent; all other implications are as depicted. (We abbreviate intrinsic [lower] density 0 for infinite sets by I[L]D0.)}
\label{fig:densityImmunityImplications}
\end{figure}

Combining these counterexamples with our results above, we exhaust all possible implications between intrinsic density 0, intrinsic lower density 0, and the standard immunity properties. The graph of the resulting implications for infinite sets is shown in Figure~\ref{fig:densityImmunityImplications}; all implications depicted are strict, and counterexamples are discussed above for all arrows not present in the diagram.

Unfortunately, in the co-\ce{} case (well-studied due to Post's Program), the majority of our proofs of failures of implication collapse. Since hh-immunity does imply dense immunity for co-\ce{} sets, it seems unlikely that our proof method from Theorem~\ref{thm:hhImmune} will help separate the higher immunity properties from intrinsic density 0. In fact, most of our other failures of implication are exhibited by 1-generics or derived from 1-randoms, examples that are inherently not co-\ce{}. This leaves the co-\ce{} diagram incomplete, with a few interesting open questions.

\begin{openQuestion}
Is there an infinite \ce{} set with intrinsic density 1 that is not hypersimple? (f)sh-simple?
\end{openQuestion}

\begin{openQuestion}
Is there a hypersimple set with lower density less than 1? Equal to 0? For all $\eps>0$, is there a hypersimple set with lower density less than $\eps$?
\end{openQuestion}

\section{Intrinsic density and randomness}
\label{sec:idRandomness}

Let us move from the extremes of density (density~0 or 1) to the intermediate densities, as exemplified by density~$\frac{1}{2}$.

The notion of ``density~$\frac{1}{2}$'' is easily recognized as the Law of Large Numbers, as applied to a sequence of flips of a fair coin. We might hope that having density~$\frac{1}{2}$ would be in some way related to a randomness-theoretic property, and stochasticity is the obvious candidate. This follows von~Mises \cite{vonMises} in establishing the existence of limiting frequencies as the key property of a random sequence and, more specifically, the preservation of limiting frequencies under place-selection rules that determine the next bit sampled based only on the values previously sampled. If $\mathcal{C}$ is such a class of selection rules, we say that a sequence $S$ is $\mathcal{C}$-stochastic if no selection rule in $\mathcal{C}$ can select a biased (non-density-$\frac{1}{2}$) subsequence from $S$. We say that a selection rule is monotonic if the places it selects are always in increasing order, and oblivious if the places it selects are independent of $S$, the sequence subject to the selection rule.

There are several standard notions of stochasticity that will be useful to keep in mind. Church-stochastic sequences are stochastic under computable monotonic selection rules, whereas von~Mises-Wald-Church-stochastic sequences are stochastic under partial computable monotonic selection rules. By this definition, sets with density $\frac{1}{2}$ might be termed ``trivially stochastic''; that is to say, they are unbiased under the single selection rule that selects all positions in order. However, this is rarely considered, as stochasticity is generally taken to require selection of proper subsequences.

Passing to intrinsic density $\frac{1}{2}$, we find something more practical: stochasticity under the class of all computable permutations, represented as oblivious selection rules. In fact, this is the class of non-monotonic oblivious selection rules that must eventually select every position. The corresponding notion of randomness, that no computable martingale succeeds on the sequence of bits selected by such a rule, is permutation randomness as defined by Miller and Nies \cite{permutationRandomness}; intrinsic density~$\frac{1}{2}$ is thus the natural notion of permutation stochasticity.

As mentioned above, stochasticity is generally taken to require selection of proper subsequences to preserve density~$\frac{1}{2}$; this would seem to be an obstacle to considering intrinsic density~$\frac{1}{2}$ as a valid notion of stochasticity. Fortunately, permutation stochasticity in fact ensures that many proper subsequences are also unbiased, including all computably-sampled subsequences. We can be fully precise about this with one more definition in hand, and a combinatorial lemma to relate it to our previous work.

\begin{defn}
Given a total computable injection $p$ and an infinite binary sequence $X(n)$, we say that the \emph{subsequence of $X$ sampled by $p$} is
\begin{equation*}
p^{-1}(S)=\set{X(p(n))}_{n\in\omega}.
\end{equation*}
Abusing notation as noted in the introduction, we can apply this directly to any set $S\subseteq\omega$. In set notation, this gives
\begin{equation*}
p^{-1}(S)=\setbuildc{n\in\omega}{p(n)\in S}.
\end{equation*}
\end{defn}

However, even though this new method of sampling generalizes our previous method of considering sets under computable permutations of $\omega$, it has no additional power as far as density is concerned.

\begin{lemma}\label{lem:injectionDensity}
Given any total computable injection $p$, there is a computable permutation $\pi$ such that, for any set $S$, $\pi^{-1}(S)$ has upper and lower density equal to those of $p^{-1}(S)$.
\end{lemma}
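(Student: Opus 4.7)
The plan is to build $\pi$ by interleaving the enumeration of $p$ with sparse insertions drawn from a computable density-$0$ set. Fix such a set $D\subseteq\omega$ (say $D=\set{k^2:k\ge 1}$), and define $\pi$ in stages, maintaining $U_s\defeq\pi([0,s))$: at stage $s\notin D$, set $\pi(s)=p(k)$ for the least $k$ with $p(k)\notin U_s$; at stage $s\in D$, set $\pi(s)=\min(\omega\setminus U_s)$. Both rules are computable, and injectivity is automatic since we only ever assign values outside $U_s$. For surjectivity, induct on $m$: once $\set{0,\ldots,m-1}\subseteq U$, either $m\in p(\omega)$, in which case $m$ appears at the non-$D$ stage that finally processes its $p$-index (the indices chosen at non-$D$ stages being strictly increasing), or $m\notin p(\omega)$, in which case $m$ is taken at the very next $D$-stage as the minimum unused natural. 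Thus $\pi$ is a computable permutation of $\omega$.

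For the density comparison, let $n_0<n_1<\cdots$ enumerate the non-$D$ stages, let $k_j$ be the $p$-index selected at stage $n_j$, and let $j(N)$ count the non-$D$ stages in $[0,N)$. The greedy rule forces $k_0<k_1<\cdots$ together with the crucial estimate
\[j\le k_j\le j+d(n_j),\qquad d(N)\defeq\abs{D\cap[0,N)},\]
since the indices below $k_j$ that the rule skips are exactly those whose $p$-value was grabbed by an earlier $D$-stage, and there are at most $d(n_j)$ such. Hence $\set{k_0,\ldots,k_{j(N)-1}}$ and $[0,j(N))$ differ by $O(d(N))$ elements; summing the non-$D$ and $D$ contributions to $\pi^{-1}(S)\cap[0,N)$ yields
\[\abs*{\pi^{-1}(S)\cap[0,N)}=\abs*{p^{-1}(S)\cap[0,j(N))}+O(d(N)).\]
Since $d(N)=o(N)$ and $j(N)/N\to 1$, dividing by $N$ gives $\rho_N(\pi^{-1}(S))=\rho_{j(N)}(p^{-1}(S))+o(1)$; as $j(N)$ traverses every non-negative integer, the upper and lower densities of $\pi^{-1}(S)$ coincide with those of $p^{-1}(S)$.

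The main obstacle is the asynchrony introduced whenever a $D$-stage happens to pick a value in $p(\omega)$: this nudges every subsequent non-$D$ choice and could, a priori, accumulate into a $\Theta(1)$ shift in density. The bound $k_j-j\le d(n_j)$ is precisely what tames this disruption, and its proof rests on the ``least available $p$-index'' rule at non-$D$ stages. Once that bound is in place, the remaining density arithmetic is routine.
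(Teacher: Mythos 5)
Your proof is correct, but it takes a genuinely different route from the paper. The paper keeps $\pi$ as close to $p$ as possible: at each non-square $j$ it sets $\pi(j)=p(j)$ if that value is still free, and falls back to ``least unused'' at squares and at the rare stages where $p(j)$ was already grabbed. The crucial observation there is that $p(j)$ can remain outside $\pi([0,n))$ only when $j$ is a square (at a non-square stage $j$ with $p(j)$ still free, the rule takes $p(j)$ immediately); hence $\pi([0,n))$ and $p([0,n))$ agree except on $O(\sqrt n)$ elements, so $\abs{\pi^{-1}(S)\cap[0,n)}=\abs{\pi([0,n))\cap S}$ and $\abs{p^{-1}(S)\cap[0,n)}=\abs{p([0,n))\cap S}$ differ by at most $\ceil{\sqrt n}$, and the densities are compared at the \emph{same} index $n$. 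Your construction instead re-enumerates the $p$-values in order at the non-$D$ stages, decoupling position $n_j$ from $p$-index $k_j$; you then need the extra index-shift bound $j\le k_j\le j+d(n_j)$ and a compression step $\rho_N(\pi^{-1}(S))=\rho_{j(N)}(p^{-1}(S))+o(1)$, exploiting that $j(N)$ is a nondecreasing surjection onto $\omega$. Both arguments rest on the same core idea --- absorb the values $p$ misses into a sparse set of ``least unused'' stages --- but the paper's variant avoids the index shift entirely and is a bit more economical, while yours sidesteps the paper's (slightly subtle) analysis of when $\pi(j)\ne p(j)$ can cascade. One small imprecision worth patching in your surjectivity argument: $m\in p(\omega)$ need not be taken at a non-$D$ stage, since a $D$-stage can grab $m$ first and permanently skip its $p$-index; the correct case split is that $m$ enters either at the non-$D$ stage that processes its $p$-index, or at a $D$-stage that takes it as the minimum unused value.
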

\begin{proof}
Given a total computable injection $p$, we define a computable permutation $\pi$ by assigning values $\pi(j)$ in increasing order of $j$. If $j$ is a non-square integer, and $p(j)$ has not already been assigned to $\pi(i)$ for some $i<j$, define $\pi(j)=p(j)$. Otherwise, define $\pi(j)$ to be the least value not assigned to any $\pi(i)$ with $i<j$.

The sizes of $\pi(\left[0,n\right))\cap S$ and $p(\left[0,n\right))\cap S$ differ by at most $\ceil{\sqrt{n}}$. Thus, $\rho_n(\pi^{-1}(S))$ differs from $\rho_n(p^{-1}(S))$ by less than $\frac{2}{\sqrt{n}}$. Therefore,
\begin{equation*}
\limsup_{n\to\infty}{\rho_n(\pi^{-1}(S))}=\limsup_{n\to\infty}{\rho_n(p^{-1}(S))},
\end{equation*}
and
\begin{equation*}
\liminf_{n\to\infty}{\rho_n(\pi^{-1}(S))}=\liminf_{n\to\infty}{\rho_n(p^{-1}(S))}.
\end{equation*}
\end{proof}

From this minor lemma, we note that in fact, any set with intrinsic density has constant density not only under all computable permutations of $\omega$, but also under all computable ``samplings'' of $\omega$. To be more precise:

\begin{cor}
A set $A$ has intrinsic density $d$ iff $\rho(p^{-1}(A))=d$ for every total computable injection $p$.
\end{cor}
\begin{proof}
The reverse direction is obvious by definition, since computable permutations of $\omega$ are also total computable injections.

The forward direction is, at this point, also quite straightforward. Fix a total computable injection $p$. By Lemma~\ref{lem:injectionDensity}, there is a computable permutation $\pi$ such that $\pi^{-1}(S)$ has the same upper and lower densities as $p^{-1}(S)$ for any set $S$, and in particular for $A$. Since $A$ has intrinsic density $d$, we know that $\rho(\pi^{-1}(A))=d$, and so that $\rho(p^{-1}(A))=d$.
\end{proof}

This corollary reveals that intrinsic density~$\frac{1}{2}$ coincides with another form of stochasticity: stochasticity under all computable injections, or equivalently the class of all oblivious non-monotonic selection rules. The corresponding notion of randomness is injection randomness, also as defined by Miller and Nies \cite{permutationRandomness}. Thus, we see that:

\begin{cor}
Permutation stochasticity and injection stochasticity coincide, and are both equivalent to intrinsic density~$\frac{1}{2}$.
\end{cor}

Considering this interpretation of intermediate intrinsic densities (strictly between 0 and 1) as a form of stochasticity, we find that intrinsic density provides an interesting link between the immunity properties and randomness-theoretic ideas. As discussed above, intrinsic density 0 is an immunity-type property, and so intrinsic density 1 is a form of co-immunity (or, as it is called for \ce{} sets, simplicity). Thus, intrinsic density illustrates the relations between immunity, randomness, and simplicity, and provides a continuum of intermediate concepts, all of which follow in the spirit of stochasticity as established by von~Mises. This calls our attention to the fact that all of these properties are, in essence, descriptions of unpredictability: a set is immune if it is sufficiently difficult for a computable enumeration to stay within the set, co-immune if it is difficult to avoid the set, and stochastic if it is difficult to achieve any sort of persistent pattern of biased intersection with the set or its complement.

Of course, all of this relies fundamentally on our use of \emph{intrinsic} density. Considering asymptotic density alone, we find no useful connection to computability or randomness. A set with density~0 need not be immune in any useful sense, as is made clear by considering the computable set of perfect squares. Taking the complement, we obtain a set with density~1 that is trivial to avoid. Moreover, density~$\frac{1}{2}$ is a poor notion of randomness, as recognized by and before von~Mises, carrying no real implications for unpredictability; for instance, the set of even numbers is ``stochastic'' in this sense, and yet is trivially predictable.

\section{Strong variants of generic-case computability}
\label{sec:intrinsicComputability}

Having begun by investigating the implications of adding computable invariance to asymptotic density, we end by returning to the motivating problem with which we began: strengthening Jockusch and Schupp's generic-case computability to obtain similar invariance. They defined generic-case computability as follows:

\begin{defn}
A partial function $f:\omega\to\set{0,1}$ is a \emph{partial description} of $A\subset\omega$ if $f(n)=A(n)$ whenever $f(n)$ converges.

We say that $A\subseteq\omega$ is \emph{computable in the generic case}, or \emph{generic-case computable}, if $A$ has a computable partial description with density-1 domain. We call such a description a \emph{generic-case description}. \cite{JSgc}
\end{defn}

In practical terms, the weakness of generic-case computability was shown by Hamkins and Miasnikov \cite{genericHalting}, who demonstrated that, in several reasonable codings, the halting problem is in fact decidable on a set of asymptotic density 1, due to the density of trivially non-halting programs. This suggests that we should strengthen generic-case computability, to avoid rendering the halting problem ``decidable'' for trivial reasons.

Rybalov \cite{stronglyGenericHalting} has shown that if we insist on convergence on a set with density exponentially approaching 1 (also known as strong generic-case computability), then the halting problem is instead undecidable. Of course, his analysis makes use of asymptotic density on the set of Turing programs, considering the programs with at most $n$ non-final states; strong generic-case computability is not directly applicable to arbitrary subsets of $\omega$, so we must look for an alternative approach.

Furthermore, Corollary~\ref{cor:ceDensity} has a somewhat unfortunate consequence for generic-case computability. For any problem, if there is an algorithm that converges on an infinite set of inputs, that algorithm becomes a generic-case solution for the problem under some alternate coding of the input. After all, the domain of the algorithm is necessarily \ce{}; there is therefore some coding of the underlying problem (corresponding to a permutation of $\omega$) under which the algorithm converges on a set of density 1. In other words, most natural problems have generic-case computable solutions (as defined by Jockusch and Schupp \cite{JSgc}) under some computable permutation. This gives us another reason to use a stricter notion of generic-case computability.

Returning to the original definition of generic-case complexity for group-theoretic problems, from Kapovich, Myasnikov, Schupp, and Shpilrain \cite{genericComplexity}, we note that the authors defined a problem in a finitely generated group to have generic-case complexity $\mathcal{C}$ if and only if this complexity is independent of the choice of generating set. They specifically state that, though the worst-case complexity for most group-theoretic problems does not depend on one's choice of generating set, there is no reason to assume that this should also hold for generic-case complexity. As this choice directly corresponds to a coding of the input to the generic-case algorithm, a natural translation would require that our set be generic-case computable under every computable permutation of $\omega$. Equivalently, by the Myhill Isomorphism Theorem, $A$ should not be considered generic-case decidable unless all of the 1-equivalent sets are as well. Fortunately, this coincides with the standard idea that most computability-theoretic definitions are (or ``should be'') invariant under computable permutation.

We will call this new notion intrinsic generic-case computability, as it must be preserved under computable permutations of $\omega$. Below, we propose four definitions, varying in degree of uniformity.

Our weakest candidate notion of intrinsic generic-case computability is the direct translation of the definition by Kapovich, Myasnikov, Schupp, and Shpilrain:
\begin{defn}
A set $A$ is \emph{(weakly) intrinsically generic-case computable} if{}f $\pi(A)$ is generic-case computable for every computable permutation $\pi\!:\omega\to\omega$.
\end{defn}
Note that we place no requirements on the relationships between the generic-case descriptions for each such image $\pi(A)$; the algorithms may be essentially unrelated.

Insisting on a bare minimum of uniformity, we obtain our next candidate definition:
\begin{defn}
A set $A$ is \emph{(uniformly) intrinsically generic-case computable} if{}f there is a uniformly computable family of functions $f_e$ such that, if $\varphi_e$ is a computable permutation, $f_e$ is a generic-case description of $\varphi_e(A)$; that is, $f_e$ has density-1 domain and wherever $f_e(n)$ converges, it converges to $\paren*{\varphi_e(A)}(n)$.
\end{defn}

On the other hand, allowing our description to require an index may weaken our notion of uniformity; after all, this means that our description $f$ cannot be given only a black-box oracle specifying the computable permutation, but actually requires knowledge of \emph{how} the permutation can be computed --- and in particular may depend on the specific program provided to compute the permutation.

Requiring the description to work with only an oracle might seem a trivial variation, but significant differences have been observed in analogous situations; specifically, in computable model theory, the index-based definition of uniform computable categoricity has been shown to be strictly weaker (and less natural) than the definition providing only an oracle. \cite{ucc} (In general, any oracle-based definition must be at least as strong as the corresponding index-based definition, since it is well-established that there is a Turing-machine procedure allowing us to convert an index into an effective oracle.) We therefore include this option in our list of candidate notions. In this case, we would say that:
\begin{defn}
A set $A$ is \emph{(oracle) intrinsically generic-case computable} if{}f there is a Turing functional $\Phi^X$ such that, for any computable permutation $\pi$ (represented as a set of pairs), $\Phi^{\pi}$ is a generic-case description of $\pi(A)$.
\end{defn}

Finally, we might insist on complete uniformity, and require that a single algorithm provide a description of $A$ on a set that has density 1 under all computable permutations; in other words, that the algorithm converge on a set of \emph{intrinsic} density 1.
\begin{defn}
A set $A$ is \emph{(strongly) intrinsically generic-case computable} if{}f it has a description $\varphi_e$ that converges on a set of intrinsic density 1. (Equivalently, $\varphi_e\circ\pi^{-1}$ is a generic-case description of $\pi(A)$ for all computable permutations $\pi$.)
\end{defn}
Since r-maximal sets are \ce{} and have intrinsic density 1, any r-maximal set is in fact strongly intrinsically generic-case computable. This provides a convenient demonstration that even this strongest definition is weaker than ordinary computability.

More work will be required to distinguish these definitions of intrinsic generic-case computability, and some of them may prove to be equivalent. At this point, though, there are no reasons to presume any equivalences. The author personally expects that the uniform and strong definitions of intrinsic generic-case computability will be the most useful of these four.

On the other hand, even our weakest definition has a certain demonstrable strength. A set $S\subseteq\omega$ is said to be an \emph{index set} if $S(e)=S(e')$ for all $e,e'\in\omega$ where $e$ and $e'$ are indices for equivalent Turing machines. Rice's Theorem \cite{riceTheorem} states that the only computable index sets are $\emptyset$ and $\omega$. We can easily extend this to intrinsic generic-case computability, showing that no non-trivial index set can be weakly intrinsically generic-case computable. Therefore, the halting problem is not intrinsically generic-case computable under any of these definitions.

\begin{thm}\label{thm:indexSetsNotIGC}
Suppose $S\subseteq\omega$ is an index set (\ie{}, $S(e)=S(e')$ for all $e,e'$ such that $\varphi_e=\varphi_{e'}$). $\pi(S)$ is generic-case computable for all computable permutations $\pi$ if{}f $S$ is computable, and thus if{}f $S=\emptyset$ or $S=\omega$.
\end{thm}
\begin{proof}[Sketch of Proof]
The reverse implication is obvious; we will only consider the forward implication.

By the Padding Lemma for Turing machines \cite{soareREsets}, for any $e$, we can enumerate a set $\set{x_{e,0}=e,x_{e,1},\ldots}$ such that $\varphi_e=\varphi_{x_{e,i}}$ for all $i$. Consider the computable sets $R_e=\setbuild*{2^em}{m\text{ odd}}$, and note that $\rho(R_e)=2^{-e-1}$; these, along with the singleton $\set{0}$, comprise a partition of $\omega$. One can easily construct a computable permutation of $\omega$ such that $\pi^{-1}(R_{e+1})\subseteq\set{x_{e,1},x_{e,2},\ldots}$ for all $e$, and $R_0$ is filled with the ``waste'' of the process.

Suppose $\pi(S)$ is generic-case computable, with generic-case description $\Psi$. Since $R_{e+1}$ has positive density for all $e$, there must be some $k_e\in R_{e+1}$ for which $\Psi(k_e)$ converges. To determine whether $e\in S$, wait until $\Psi(k_e)$ converges for some such $k_e$; we then have $S(e)=S(\pi^{-1}(k_e))=\Psi(k_e)$, since $\varphi_{\pi^{-1}(k_e)}=\varphi_e$. This shows that $S$ is computable, so by Rice's Theorem, $S=\emptyset$ or $S=\omega$.
\end{proof}

\begin{cor}
The halting problem is not (weakly) intrinsically generic-case computable.
\end{cor}
\begin{proof}
The halting problem is 1-equivalent to a non-computable index set (\eg{}, $\setbuild*{e}{\paren*{\exists x}\bracket*{\varphi_e(x)\!\downarrow}}$). By the Myhill Isomorphism Theorem \cite{myhillCreative}, this means that its image under some computable permutation is a non-computable index set. Composing this with the permutation from the proof of Theorem~\ref{thm:indexSetsNotIGC}, we obtain a computable permutation under which the image of the halting problem is not generic-case computable. Thus, the halting problem is not even weakly intrinsically generic-case computable.
\end{proof}

\section*{Acknowledgements}

Significant thanks are owed to Denis Hirschfeldt and Robert Soare for innumerable helpful discussions on both the broad strokes and details of the work found herein; in particular, the proof of Theorem~\ref{thm:hhImmune} is based on an approach suggested by Hirschfeldt. The author is also grateful to Carl Jockusch, Paul Schupp, and Rod Downey, for the work that inspired this research and for several conversations on the specifics of this investigation, and to an anonymous referee, whose suggestions have added substantial clarity to this paper.

\bibliographystyle{amsplain}

\begin{thebibliography}{1000}
\bibitem{absoluteUndecidables}
Laurent Bienvenu, Adam~R. Day, and Rupert H\"olzl, From bi-immunity to absolute undecidability, \emph{The Journal of Symbolic Logic} \textbf{78}(4) (2013), pp. 1218--1228.

\bibitem{cooperHHimmune}
S.~B. Cooper, Jump equivalence of the {$\Delta^0_2$} hyperhyperimmune sets, \emph{The Journal of Symbolic Logic} \textbf{37}(3) (1972), pp. 598--600.

\bibitem{dhBook}
Rodney~G. Downey and Denis~R. Hirschfeldt, \emph{Algorithmic randomness and complexity}, Theory and Applications of Computability, Springer, 2010.

\bibitem{ucc}
Rodney~G. Downey, Denis~R. Hirschfeldt, and Bakhadyr Khoussainov, Uniformity in computable structure theory, \emph{Algebra and Logic} \textbf{42}(5) (2003), pp. 318--332.

\bibitem{ershovDensity}
Rodney~G. Downey, Carl~G. Jockusch, Jr., Timothy~H. McNicholl, and Paul~E. Schupp, Asymptotic density and the {E}rshov hierarchy, \emph{Mathematical Logic Quarterly}, to appear. Available at \texttt{\href{http://arxiv.org/abs/1309.0137}{arXiv:1309.0137}}.

\bibitem{DJSdensity}
Rodney~G. Downey, Carl~G. Jockusch, Jr., and Paul~E. Schupp, Asymptotic density and computably enumerable sets, \emph{Journal of Mathematical Logic} \textbf{13}(2) (2013).

\bibitem{gurevichAverage}
Yuri Gurevich, Average case completeness, \emph{Journal of Computer and System Sciences} \textbf{42}(3) (1991), pp. 346--398.

\bibitem{genericHalting}
Joel~David Hamkins and Alexei Miasnikov, The halting problem is decidable on a set of asymptotic probability one, \emph{Notre Dame Journal of Formal Logic} \textbf{47}(4) (2006), pp. 515--524.

\bibitem{igusaNoMinimalPair}
Gregory Igusa, Nonexistence of minimal pairs for generic computability, \emph{The Journal of Symbolic Logic} \textbf{78}(2) (2013), pp. 511--522.

\bibitem{JSgc}
Carl~G. Jockusch, Jr. and Paul~E. Schupp, Generic computability, {T}uring degrees, and asymptotic density, \emph{Journal of the London Mathematical Society} \textbf{85}(2) (2012), pp. 472--490.

\bibitem{nonhighCohesive}
Carl~G. Jockusch, Jr. and Frank Stephan, A cohesive set which is not high, \emph{Mathematical Logic Quarterly} \textbf{39}(1) (1993), pp. 515--530.

\bibitem{genericComplexity}
Ilya Kapovich, Alexei Myasnikov, Paul~E. Schupp, and Vladimir Shpilrain, Generic-case complexity, decision problems in group theory, and random walks, \emph{Journal of Algebra} \textbf{264}(2) (2003), pp. 665--694.

\bibitem{simplexExponential}
Victor~L. Klee, Jr. and George~J. Minty, How good is the simplex algorithm?, in: \emph{Inequalities, {III} ({P}roc.\@ {T}hird {S}ympos., {U}niv.\@ {C}alifornia, {L}os {A}ngeles, {C}alif., {S}ept. 1--9, 1969)} (Oved Shisha, ed.), Academic Press, New York, 1972, pp.~159--175.

\bibitem{levinAverage}
Leonid~A. Levin, Average case complete problems, \emph{SIAM Journal on Computing} \textbf{15}(1) (1986), pp. 285--286.

\bibitem{permutationRandomness}
Joseph~S. Miller and Andr{\'e} Nies, Randomness and computability: Open questions, \emph{The Bulletin of Symbolic Logic} \textbf{12}(3) (2006), pp. 390--410.

\bibitem{myhillCreative}
John~R. Myhill, Sr., Creative sets, \emph{Mathematical Logic Quarterly} \textbf{1}(2) (1955), pp. 97--108.

\bibitem{niesCaR}
Andr{\'e} Nies, \emph{Computability and randomness}, Oxford Logic Guides, Oxford University Press, 2009.

\bibitem{post1944}
Emil~L. Post, Recursively enumerable sets of positive integers and their decision problems, \emph{Bulletin of the American Mathematical Society} \textbf{50}(5) (1944), pp. 284--316.

\bibitem{riceTheorem}
Henry~G. Rice, Classes of recursively enumerable sets and their decision problems, \emph{Transactions of the American Mathematical Society} \textbf{74}(2) (1953), pp. 358--366.

\bibitem{stronglyGenericHalting}
Alexander Rybalov, Note: On the strongly generic undecidability of the halting problem, \emph{Theoretical Computer Science} \textbf{377}(1-3) (2007), pp. 268--270.

\bibitem{soareREsets}
Robert~I. Soare, \emph{Recursively enumerable sets and degrees: A study of computable functions and computably generated sets}, Perspectives in Mathematical Logic, Springer-Verlag, 1987.

\bibitem{smoothedAnalysis}
Daniel~A. Spielman and Shang-Hua Teng, Smoothed analysis of algorithms: Why the simplex algorithm usually takes polynomial time, \emph{Journal of the ACM} \textbf{51}(3) (2004), pp. 385--463.

\bibitem{vLaxioms}
Michiel van~Lambalgen, The axiomatization of randomness, \emph{The Journal of Symbolic Logic} \textbf{55}(3) (1990), pp. 1143--1167.

\bibitem{vonMises}
Richard von~Mises, On the foundations of probability and statistics, \emph{Annals of Mathematical Statistics} \textbf{12}(2) (1941), pp. 191--205.
\end{thebibliography}

\end{document}